\newtheorem{theorem}{Theorem}[section]
\newtheorem{lemma}[theorem]{Lemma}
\newtheorem{proposition}[theorem]{Proposition}
\newtheorem{corollary}[theorem]{Corollary}
\theoremstyle{definition}
\newtheorem{definition}[theorem]{Definition}
\newtheorem{example}[theorem]{Example}
\newtheorem{question}[theorem]{Question}
\newtheorem{remark}[theorem]{Remark}
\newcommand{\Tr}{\text{Tr}}%{\text{Tr}\,}
\newcommand{\Rep}{\text{Rep}}
\newcommand{\Vect}{\text{Vec}}
\newcommand{\Qbs}{\overline{\Bbb Q}^\times}
\newcommand{\Qb}{{\overline{\Bbb Q}}}
\newcommand{\Zpb}{\overline{\Bbb Z_p}}
\newcommand{\Qpb}{\overline{\Bbb Q_p}}
\newcommand{\Fpb}{\overline{\Bbb F_p}}
\newcommand{\ben}{\begin{enumerate}}
\newcommand{\een}{\end{enumerate}}
\newcommand{\C}{{\mathcal C}}
\newcommand{\D}{{\mathcal D}}
\newcommand{\E}{\mathcal{E}}
\begin{document}

\title[Reductions of tensor categories modulo primes]
{Reductions of tensor categories modulo primes}
\author{Pavel Etingof}
\address{Department of Mathematics, Massachusetts Institute of Technology,
Cambridge, MA 02139, USA} \email{etingof@math.mit.edu}

\author{Shlomo Gelaki}
\address{Department of Mathematics, Technion-Israel Institute of
Technology, Haifa 32000, Israel} \email{gelaki@math.technion.ac.il}

\date{\today}

\keywords{tensor categories, good prime, reduction modulo a prime}

\dedicatory{Dedicated to Miriam Cohen}

\begin{abstract}
We study good (i.e., semisimple) reductions of semisimple rigid
tensor categories modulo primes. A prime $p$ is called good for a
semisimple rigid tensor category $\C$ if such a reduction exists
(otherwise, it is called bad). It is clear that a good prime must be
relatively prime to the M\"uger squared norm $|V|^2$ of any simple
object $V$ of $\C$. We show, using the Ito-Michler theorem in finite
group theory, that for group-theoretical fusion categories, the
converse is true. While the converse is false for general fusion
categories, we obtain results about good and bad primes for many
known fusion categories (e.g., for Verlinde categories). We also
state some questions and conjectures regarding good and bad primes.
\end{abstract}

\maketitle

\section{Introduction}

In this paper we study good (i.e., semisimple) reductions of
semisimple rigid tensor categories (in particular, fusion
categories) modulo primes. Namely, let $\C$ be a semisimple rigid
tensor category over $\Qb$ (recall that any fusion category over a
field of characteristic zero is defined over $\Qb$ by the Ocneanu
rigidity theorem, see \cite[Theorem 2.28]{ENO1}) and let $p$ be a
prime. We say that $p$ is a {\bf good prime} for $\C$ if there is a
semisimple rigid tensor category $\C_p$ over $\Fpb$ (with the same
Grothendieck ring as $\C$) which admits a lift to a tensor category
over $\Qpb$ (in the sense of \cite[Section 9.2]{ENO1}) that becomes
equivalent to $\C$ after extension of scalars from $\Qb$ to $\Qpb$.
In this case, the category $\C_p$ is called {\bf a good reduction}
of $\C$ modulo $p$. Otherwise, if $\C_p$ does not exist, we say that
$p$ is a {\bf bad prime} for $\C$.

It is not hard to show that for any fusion category $\C$, there is a
finite set of bad primes. The goal of this paper is to find
conditions for a prime to be bad for $\C$, and determine all such
primes for various examples of fusion categories.

The organization of the paper is as follows. In Section 2, we
discuss general properties of good and bad primes. In Section 3 we
determine the bad primes for group-theoretical categories, in
particular for representation categories of finite groups, using the
Ito-Michler theorem in finite group theory; namely, we show that a
prime is bad if and only if it divides the dimension of a simple
object. In Section 4, we discuss bad primes for the Verlinde
categories (i.e., fusion categories coming from quantum groups at
roots of unity). Finally, in Section 5 we discuss some questions and
conjectures regarding good and bad primes.

{\bf Acknowledgments.} We are very grateful to Noah Snyder for
useful discussions, in particular for contributing Example
\ref{sny}. The research of the first author was partially supported
by the NSF grant DMS-1000113. The second author was supported by The
Israel Science Foundation (grant No. 317/09). Both authors were
supported by BSF grant No. 2008164.

\section{Good and bad primes}

Let $p$ be prime. Let $\Zpb$ denote the ring of integers in the
field $\Qpb$ (the algebraic closure of ${\Bbb Q}_p$).
Let $\mathfrak{m}$ be the maximal ideal in $\Zpb$.
Clearly, $\Zpb/\mathfrak{m}\cong \Fpb$.

\begin{definition}
Let $\C$ be a semisimple rigid tensor category over $\Qb$. A {\bf
good reduction} of $\C$ modulo $p$ is a semisimple rigid tensor
category $\C_p$ over $\Fpb$, categorifying the Grothendieck ring of
$\C$, such that there is a lift of $\C_p$ to $\Zpb$ (i.e., a
semisimple rigid tensor category $\widetilde{\C}_p$ over $\Zpb$
which yields $\C_p$ upon reduction by the maximal ideal
$\mathfrak{m}$) and a tensor equivalence
$\widetilde{\C}_p\otimes_{\Zpb}\Qpb\cong \C\otimes_{\Qb}\Qpb$. If a
good reduction of $\C$ exists, we will say that $p$ is a {\bf good
prime} for $\C$. Otherwise we will say that $p$ is a {\bf bad prime}
for $\C$.
\end{definition}

\begin{example} A pointed fusion category
$\Vect_G^\omega$, where $G$ is a finite group and $\omega$ is a
$3-$cocycle of $G$ with values in $\Qbs$, has a good reduction
modulo any prime, since $\omega$ can be chosen to take values in
roots of unity.
\end{example}

\begin{remark}
Note that two non-equivalent fusion categories can have equivalent
good reductions modulo a prime $p$. E.g., for any $3-$cocycle
$\omega$ on $\Bbb Z/p\Bbb Z$, the category $\Vect_{\Bbb Z/p\Bbb
Z}^\omega$ reduces to $\Vect_{\Bbb Z/p\Bbb Z}$ (with trivial
cocycle) in characteristic $p$ (as $\omega$ can be chosen to take
values in $p-$th roots of unity). This cannot happen, however, if
the global dimension of either of these two categories is relatively
prime to $p$ (see \cite[Theorem 9.6]{ENO1}).
\end{remark}

The following proposition gives a necessary condition for a prime to
be good. Recall (see e.g., \cite{Mu,ENO1}) that for any simple
object $V$ of $\C$, one can define its {\em M\"uger's squared norm}
$|V|^2$, which is an algebraic integer. Also recall that two
algebraic integers are called \emph{relatively prime} if their norms
(which are usual integers) are relatively prime.

\begin{proposition}\label{if}
Let $\C$ be a semisimple rigid tensor category over $\Qb$. If $p$ is
a good prime for $\C$ then $p$ must be relatively prime to $|V|^2$
for any simple object $V\in \C$.
\end{proposition}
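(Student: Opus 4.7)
The plan is to show that the squared norm $|V|^2$, viewed via a fixed embedding $\Qb \hookrightarrow \Qpb$, lies in $\Zpb$ and reduces mod $\mathfrak{m}$ to a nonzero element of $\Fpb$, and then to deduce the relative-primeness statement.

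Suppose $p$ is good, and let $\widetilde{\C}_p$ be a lift of $\C_p$ to $\Zpb$ equipped with an equivalence $\widetilde{\C}_p\otimes_{\Zpb}\Qpb\cong \C\otimes_{\Qb}\Qpb$. Fix an embedding $\Qb \hookrightarrow \Qpb$, and let $\widetilde V$ be the simple object of $\widetilde{\C}_p$ corresponding to $V$ under this equivalence (such a correspondence exists because the Grothendieck ring is preserved). The squared norm is defined intrinsically in any semisimple rigid tensor category as the scalar in $\End(\be)$ given by the composition of canonical coevaluation and evaluation morphisms associated with the pair $V,V^*$; in particular it behaves well under base change and reduction. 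Consequently $|\widetilde V|^2\in\End_{\widetilde{\C}_p}(\be)=\Zpb$ agrees with $|V|^2\in\Qb\subset\Qpb$ on the one hand, and reduces modulo $\mathfrak{m}$ to $|V_p|^2\in\End_{\C_p}(\be)=\Fpb$, where $V_p$ is the corresponding simple object of $\C_p$, on the other hand.

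Next I would invoke the fact that in any fusion category over an algebraically closed field the squared norm of every simple object is nonzero. This is a consequence of semisimplicity plus rigidity: the unit $\be$ appears in $V\otimes V^*$ with multiplicity one, and the composition pairing $\Hom(\be, V\otimes V^*)\otimes \Hom(V\otimes V^*,\be)\to \End(\be)$ given by the canonical coevaluation and evaluation is nondegenerate. Hence $|V_p|^2\neq 0$ in $\Fpb$, so $|V|^2\notin\mathfrak{m}$, i.e., $|V|^2$ is a unit in $\Zpb$ under the chosen embedding.

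To conclude that $p$ and $|V|^2$ are relatively prime in the sense of the paper (their norms to $\Z$ are coprime), I would apply the same argument to every Galois conjugate $\sigma(V)$ of $V$, which is again a simple object of $\C$ with squared norm $\sigma(|V|^2)$. This shows that every Galois conjugate of $|V|^2$ lies outside $\mathfrak{m}$, so the rational integer $N_{K/\Q}(|V|^2)$ (the product of all Galois conjugates, for a number field $K\subset\Qb$ containing $|V|^2$) is itself a unit in $\Zpb$, hence not divisible by $p$. The main obstacle in the argument is the nonvanishing of $|V_p|^2$ in characteristic $p$; the rest is essentially bookkeeping between the three categories $\C\otimes_{\Qb}\Qpb$, $\widetilde{\C}_p$, and $\C_p$, and the observation that Galois conjugates of simple objects are again simple objects to which the same hypothesis applies.
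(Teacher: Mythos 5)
Your proposal is correct and is essentially the paper's own (one-line) proof, expanded: the key point in both is that the M\"uger squared norm of a simple object of a semisimple rigid tensor category over a field is nonzero, so $|V|^2$ cannot lie in $\mathfrak{m}$ after reduction. The only minor imprecision is in the last step, where $\sigma(V)$ is a priori a simple object of the Galois-twisted category $\C^\sigma$ rather than of $\C$ itself; this is harmless since $\C^\sigma\otimes\Qpb$ is $\C\otimes\Qpb$ for the twisted embedding and the nonvanishing argument applies verbatim, and the paper's proof elides the same bookkeeping.
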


\begin{proof} If $p$ is not relatively prime to $|V|^2$ then $|V|^2$
would have to be zero in the reduction $\C_p$. But M\"uger's squared
norm of any simple object of a semisimple rigid tensor category must
be nonzero (see \cite{Mu,ENO1}).
\end{proof}

\begin{remark}
One may ask if the converse of Proposition \ref{if} holds. It turns
out that the answer is negative in general
(see Example \ref{sny} below).
However, in the next section we will prove that the
answer is positive for group-theoretical categories.
\end{remark}

Recall that a fusion category $\C$ is called \emph{pseudounitary} if
the M\"uger's squared norm $|V|^2$ of every simple object $V$
coincides with the square ${\rm FPdim}(V)^2$ of its Frobenius-Perron
dimension. Recall also that every weakly integral fusion category
$\C$ (i.e., such that ${\rm FPdim}(\C)$ is an integer) is
pseudounitary.

\begin{corollary} If $\C$ is a pseudounitary fusion category
then any good prime $p$ for $\C$ is relatively prime to the
Frobenius-Perron dimension ${\rm FPdim}(V)$ for any simple object
$V\in \C$. \qed
\end{corollary}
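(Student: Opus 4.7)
The plan is essentially immediate: combine Proposition \ref{if} with the definition of pseudounitarity. Assume $p$ is a good prime for $\C$. By Proposition \ref{if}, $p$ is relatively prime to $|V|^2$ for every simple object $V$. Since $\C$ is pseudounitary, $|V|^2 = \FPdim(V)^2$, so $p$ is relatively prime to $\FPdim(V)^2$.

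The only minor step is to deduce from this that $p$ is relatively prime to $\FPdim(V)$ itself. Recall that two algebraic integers $\alpha, \beta$ are relatively prime if their (integer) norms $N(\alpha)$ and $N(\beta)$ are relatively prime. Since $N(\FPdim(V)^2) = N(\FPdim(V))^2$, the condition that $p$ is coprime to $N(\FPdim(V))^2$ is equivalent (as an integer coprimality statement) to $p$ being coprime to $N(\FPdim(V))$. Thus $p$ is relatively prime to $\FPdim(V)$, which is the desired conclusion.

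There is no real obstacle here; the corollary is a direct translation of Proposition \ref{if} through the identification $|V|^2 = \FPdim(V)^2$ that defines pseudounitarity, together with the elementary observation that coprimality of algebraic integers passes between $\alpha$ and $\alpha^2$ via their norms.
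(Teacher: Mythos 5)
Your proposal is correct and matches the paper's (implicit) argument exactly: the corollary is stated with a \qed precisely because it is the immediate combination of Proposition \ref{if} with the identity $|V|^2=\FPdim(V)^2$ for pseudounitary categories. Your extra remark that coprimality to $\FPdim(V)^2$ passes to $\FPdim(V)$ via $N(\alpha^2)=N(\alpha)^2$ is a harmless and correct elaboration of a step the paper leaves unstated.
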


\begin{proposition}\label{finman}
For any fusion category $\C$, there are finitely many bad primes.
\end{proposition}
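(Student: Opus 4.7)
The plan is to exhibit a finite set of rational primes outside of which the reduction procedure of Definition 2.1 can be carried out uniformly. First, I would invoke Ocneanu rigidity \cite[Theorem 2.28]{ENO1} to realize $\C$ over a number field $K\subset\Qb$: choose a complete set of representatives of the simple objects together with bases for the Hom-spaces relevant to the associativity, evaluation, and coevaluation morphisms, and record the finitely many matrix entries that appear. These entries lie in some finite extension $K/\mathbb{Q}$; let $\O_K$ be its ring of integers.

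Next, I would collect into a single nonzero element $N\in\O_K$ all the obstructions to reducing this data at an arbitrary prime: the denominators of the chosen structure constants, the M\"uger squared norms $|V|^2$ of each simple object $V$, and the global dimension $\dim(\C)=\sum_{V\in\Irr(\C)}|V|^2$. Inverting $N$ then produces an $\O_K[N^{-1}]$-linear semisimple rigid tensor structure $\widetilde{\C}$ whose extension of scalars back to $\Qb$ recovers $\C$. For any rational prime $p$ not dividing the integer norm $N_{K/\mathbb{Q}}(N)$, pick any prime $\mathfrak{p}\subset\O_K$ above $p$; since $N\notin\mathfrak{p}$, completion at $\mathfrak{p}$ yields an embedding $\O_K[N^{-1}]\hookrightarrow\Zpb$, giving a lift $\widetilde{\C}_p:=\widetilde{\C}\otimes_{\O_K[N^{-1}]}\Zpb$ and its reduction $\C_p:=\widetilde{\C}_p\otimes_{\Zpb}\Fpb$.

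The pentagon axiom and the rigidity identities are polynomial conditions on the structure constants, so they pass to $\C_p$ verbatim; semisimplicity of $\C_p$ follows from the non-vanishing of $\dim(\C)$ modulo $\mathfrak{m}$ (guaranteed by the choice of $N$) via the Maschke-type criterion in \cite{ENO1}. Thus every $p$ not dividing $N_{K/\mathbb{Q}}(N)$ is good for $\C$, and since $N_{K/\mathbb{Q}}(N)\in\mathbb{Z}$ has only finitely many prime divisors, the set of bad primes is finite.

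The main obstacle, I expect, is the step of writing the categorical data coherently over the Dedekind domain $\O_K[N^{-1}]$: one needs to verify that a single coherent choice of bases simultaneously controls all denominators so that one $N$ suffices, and that the resulting $\O_K[N^{-1}]$-linear gadget qualifies as a semisimple rigid tensor category in the sense required by the lifting definition of \cite[Section 9.2]{ENO1}. Ocneanu rigidity guarantees there is no continuous moduli to obstruct this bookkeeping, but formally setting up tensor categories over a ring rather than a field requires a small amount of extra care.
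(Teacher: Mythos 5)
Your proposal is correct and follows essentially the same route as the paper's (very terse) proof: write the structure constants of $\C$ over a number field, observe that only finitely many primes occur in denominators, and note that all remaining primes admit the reduction. The extra bookkeeping you add (inverting the M\"uger norms and the global dimension, and checking semisimplicity of the reduction) simply spells out why those remaining primes are ``automatically good,'' which the paper leaves implicit.
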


\begin{proof}
If we write the structure morphisms of $\C$ in some basis,
there will be only finitely many primes in the denominator,
and all the other primes are automatically good.
\end{proof}

\begin{remark}
Note that Proposition \ref{finman} is not true for infinite
semisimple rigid tensor categories. For example, if $\C$ is the
category of representations of ${\rm SL}(2,\Qb)$ then all primes are
bad for $\C$, since M\"uger's squared norm of the $n-$dimensional
representation $V_{n-1}$ of ${\rm SL}(2,\Qb)$ is $n^2$.
\end{remark}

Recall (\cite{DGNO}, \cite{ENO2}) that a fusion category is called
\emph{weakly group-theoretical} if it is obtained by a chain of
extensions and equivariantizations from the trivial category.

\begin{proposition}
If $\C$ is a weakly group-theoretical category
of Frobenius-Perron dimension $d$ then any bad prime
$p$ for $\C$ divides $d$.
\end{proposition}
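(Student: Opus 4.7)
The plan is to establish the contrapositive: if $p\nmid d$, then $p$ is good for $\C$. We proceed by induction on the length of a chain
\[
\Vect = \C_0, \C_1, \ldots, \C_n = \C,
\]
in which each $\C_{i+1}$ is a $G_i$-extension or a $G_i$-equivariantization of $\C_i$. The base case $\C = \Vect$ is immediate. Since $\FPdim(\C_{i+1}) = |G_i|\cdot \FPdim(\C_i)$ in either construction, the hypothesis $p\nmid d$ forces $p\nmid |G_i|$ and $p\nmid \FPdim(\C_i)$ at every stage. Writing $\D := \C_{n-1}$ and $G := G_{n-1}$, the inductive hypothesis produces a good reduction of $\D$ with lift $\widetilde{\D}_p$ over $\Zpb$ and an equivalence $\widetilde{\D}_p \otimes_{\Zpb}\Qpb \cong \D\otimes_{\Qb}\Qpb$.

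For the equivariantization step $\C = \D^G$, the plan is to transport the $G$-action from $\D\otimes\Qpb$ to $\widetilde{\D}_p\otimes\Qpb$ via this equivalence, and then argue that because $p\nmid|G|$ such a generic $G$-action descends to a $\Zpb$-linear $G$-action on $\widetilde{\D}_p$ itself (there are no obstructions in the relevant deformation theory, as they live in group cohomology $H^*(G,-)$ of a group whose order is invertible mod $p$). Setting $\widetilde{\C}_p := \widetilde{\D}_p^G$, the generic fiber recovers $\D^G\otimes\Qpb = \C\otimes\Qpb$, and the reduction modulo $\mathfrak{m}$ is the $G$-equivariantization of the reduction of $\widetilde{\D}_p$, which remains a semisimple rigid tensor category because $\Fpb[G]$ is semisimple.

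For the extension step, the plan is to invoke the classification of $G$-extensions of $\D$ by a triple $(c, M, \alpha)$, where $c\colon G \to \underline{\text{BrPic}}(\D)$ is a monoidal $2$-functor, $M$ trivializes an associated $H^3$-obstruction, and $\alpha \in H^3(G,\Qbs)$ parametrizes the remaining freedom. Because $p\nmid|G|$, the cohomology $H^i(G,\mu)$, for $\mu$ the group of roots of unity, is the same whether $\mu$ is regarded inside $\Qbs$, $\Zpb^\times$, or $\Fpbs$, so $\alpha$ and the obstruction class can be arranged to take values in roots of unity of order prime to $p$ and hence in $\Zpb^\times$; likewise, by the same integrality-of-autoequivalences principle as in the equivariantization case, the Brauer-Picard datum $(c, M)$ descends from $\D\otimes\Qpb$ to $\widetilde{\D}_p$ over $\Zpb$. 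I expect this descent of Brauer-Picard data to be the main technical obstacle, as it requires verifying that the $2$-group $\underline{\text{BrPic}}(\widetilde{\D}_p)$ computes (up to $p$-power torsion that is irrelevant here) the generic Brauer-Picard $2$-group $\underline{\text{BrPic}}(\D\otimes\Qpb)$—a statement which is morally a consequence of the coprime-order lifting theory of \cite{ENO1}, but whose precise formulation needs care.
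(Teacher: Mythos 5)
Your proposal follows essentially the same route as the paper: induction on the length of the chain, with the inductive step split into the equivariantization case (transport of $G$-actions) and the extension case (transport of the classifying data of $G$-extensions), using coprimality of $p$ to $|G|$ and to the dimension of $\D$. Two remarks on the points where your mechanism differs from the paper's. First, for equivariantizations the paper does not need any vanishing of $H^*(G,-)$: the input is that $p\nmid\dim(\D)$ makes $\D_p$ nondegenerate, so the lifting theory of \cite[Section 9]{ENO1} shows that the lifting map ${\rm Eq}(\D_p)\to{\rm Eq}(\D)$ is an isomorphism and hence that the categorical groups $\underline{\rm Eq}(\D_p)$ and $\underline{\rm Eq}(\D)$ are equivalent; $G$-actions, being homomorphisms from $G$ into this categorical group, are then automatically in bijection, and $p\nmid|G|$ is used only to keep the equivariantization semisimple. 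Second, the descent of the Brauer--Picard data, which you correctly flag as the main technical obstacle, is closed in the paper by \cite[Theorem 1.1]{ENO3}: ${\rm BrPic}(\E)$ is naturally isomorphic to the group of braided auto-equivalences of the Drinfeld center $\mathcal{Z}(\E)$, and the same lifting argument applied to $\mathcal{Z}(\D_p)$ yields an isomorphism of Brauer--Picard groupoids, whence a bijection of $G$-extensions; no separate analysis of the cohomological parameters $(c,M,\alpha)$ is needed.
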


\begin{proof}
The proof is by induction on the length of the chain.
The base of induction is clear, and the induction step follows
from the following lemma.

\begin{lemma}
Let $\D$ be a fusion category of global dimension $n$, and let $G$
be a finite group. Let $\C$ be either a $G-$extension of $\D$, or a
$G-$equivariantization of $\D$. Let $p$ be a prime, which is
relatively prime to both $n$ and $|G|$. If $\D$ admits a good
reduction modulo $p$, then so does $\C$.
\end{lemma}

\begin{proof}
Let $\D_p$ be a good reduction of $\D$ modulo $p$. Since $p$ is
relatively prime to $n$, this is a non-degenerate fusion category
(i.e., its global dimension $\ne 0$), so the lifting theory of
\cite[Section 9]{ENO1}, applies. In particular, it follows from the
proof of \cite[Theorem 9.6]{ENO1} that the lifting map ${\rm
Eq}(\D_p)\to {\rm Eq}(\D)$ between the groups of tensor
auto-equivalences of $\D_p$ and $\D$, defined in \cite[Section
9]{ENO1}, is an isomorphism (i.e., any auto-equivalence of $\D$ has
a reduction modulo $p$). This implies that we have an isomorphism of
the corresponding categorical groups $\underline{\rm Eq}(\D_p)\to
\underline{\rm Eq}(\D)$ (see \cite[Section 4.5]{ENO3}). Since
$G-$actions on a fusion category $\E$ correspond to homomorphisms
$G\to \underline{\rm Eq}(\E)$, we find that $G-$actions on $\D$ and
$\D_p$ (and hence the corresponding equivariantizations) are in
bijection. Moreover, since $|G|$ is coprime to $p$, it is easy to
see that all the $G-$equivariantizations of $\D_p$ are semisimple.
Thus, any $G-$equivariantization of $\D$ has a good reduction modulo
$p$ (which is the corresponding $G-$equivariantization of $\D_p$).

Also, we see that the lifting map defines an isomorphism of
Brauer-Picard groups ${\rm BrPic}(\D_p)\to {\rm BrPic}(\D)$ (see
\cite[Section 4.1]{ENO3}). This follows from the fact that by
\cite[Theorem 1.1]{ENO3}, for any fusion category $\E$ the
Brauer-Picard group ${\rm BrPic}(\E)$ is naturally isomorphic to the
group ${\rm EqBr}(\mathcal{Z}(\E))$ of braided auto-equivalences of
the Drinfeld center $\mathcal{Z}(\E)$, and from the proof of
\cite[Theorem 9.6]{ENO1}. This implies that we have an isomorphism
of Brauer-Picard groupoids $\underline{\underline{{\rm
BrPic}(\D_p)}}\to \underline{\underline{{\rm BrPic}(\D)}}$ (see
\cite[Section 4.1]{ENO3}). Since for any fusion category $\E$,
$G-$extensions of $\E$ are classified by homomorphisms $G\to
\underline{\underline{{\rm BrPic}(\E)}}$, we conclude that
$G-$extensions of $\D_p$ and $\D$ are in bijection. Thus, any
$G-$extension of $\D$ has a good reduction modulo $p$ (which is the
corresponding $G-$extension of $\D_p$).
\end{proof}
The proposition is proved.
\end{proof}

\section{Bad primes for group-theoretical categories}

Let $G$ be a finite group and let $\omega\in Z^3(G,\Qbs)$ be a
normalized $3-$cocycle on $G$. Let $\Vect_G^{\omega}$ be the fusion
category of finite-dimensional $G-$graded $\Qb-$vector spaces with
the associativity defined by $\omega$. Let $H$ be a subgroup of $G$
such that $\omega|_{H}$ is cohomologically trivial, and let $\psi\in
C^2(H,\Qbs)$ be such that $\omega|_{H} = d\psi$; then the twisted
group algebra $\Qb^{\psi}[H]$ is a unital associative algebra in
$\Vect_G^{\omega}$. The {\em group-theoretical} category $\C =
\C(G,H,\omega,\psi)$ is defined as the fusion category of
$\Qb^{\psi}[H]-$bimodules in $\Vect_G^{\omega}$ (see
\cite[Definition 8.40]{ENO1}, \cite{O}).

Let $R$ be a set of representatives of double cosets of $H$ in $G$.
In \cite{O} it is explained that there is a bijection between the
isomorphism classes of simple objects $V_{g,\rho}$ in $\C$ and
isomorphism classes of pairs $(g,\rho)$, where $g \in R$ and $\rho$
is an irreducible projective representation of $H^g:=H\cap gHg^{-1}$
with a certain $2-$cocycle $\psi^g$. (See also \cite{GN}.) The
dimension of $V_{g,\rho}$ is $\frac{|H|}{|H^g|} \dim(\rho)$.

Assume now that $H\subseteq G$ contains an abelian subgroup $A$
which is normal in $G$. Let $K:=H/A$, and assume that the orders of
$K$ and $A$ are coprime (so that $H$ is isomorphic to $A\rtimes K$).

\begin{proposition}\label{na}
Suppose that $\psi^g|_A$ is cohomologically trivial for any $g\in
G$. Then there exist cocycles $\eta\in Z^2(G/A,A^\vee)$,
$\widetilde{\omega}\in Z^3(\widetilde{G}_\eta,\Qbs)$ and a
$2-$cochain $\widetilde{\psi}\in C^2(K,\Qbs)$ such that
$\C\cong\C(\widetilde{G}_\eta,K,\widetilde{\omega},\widetilde{\psi})$,
where the crossed product
$\widetilde{G}_\eta:=A^\vee\rtimes_{\eta}G/A$ is the extension of
$G/A$ by $A^\vee$ using $\eta$.
\end{proposition}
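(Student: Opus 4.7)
The plan is to establish the equivalence by exhibiting a Morita equivalence between the pointed fusion categories $\Vect_G^\omega$ and $\Vect_{\widetilde{G}_\eta}^{\widetilde{\omega}}$ that transports the algebra $\Qb^\psi[H]$ to $\Qb^{\widetilde{\psi}}[K]$. Since $\C$ is by definition the fusion category of $\Qb^\psi[H]$-bimodules in $\Vect_G^\omega$, such a transport will automatically produce an equivalence of the two group-theoretical categories.

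First I would normalize $\psi$. Setting $g=e$ in the hypothesis gives that $\psi|_A$ is cohomologically trivial. Since $\gcd(|A|,|K|)=1$ and the groups $H^j(A,\Qbs)$ are torsion of order coprime to $|K|$, the Lyndon--Hochschild--Serre spectral sequence for $A\to H\to K$ degenerates and yields
\[
H^2(H,\Qbs)\;\cong\;H^2(K,\Qbs)\,\oplus\,H^1(K,A^\vee)\,\oplus\,H^2(A,\Qbs)^K.
\]
After a coboundary modification of $\psi$ (which does not affect $\C$), the triviality of $\psi|_A$ kills the third summand, and the hypothesis on $\psi^g|_A$ for \emph{all} $g\in G$, applied by comparing transgressions from different $G$-cosets, kills the second summand; hence $\psi$ is the inflation along $H=A\rtimes K\to K$ of some $\widetilde{\psi}\in C^2(K,\Qbs)$.

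Next I would construct $\eta$, $\widetilde{\omega}$, and the Morita equivalence. The conjugation action of $G$ on $A$ descends to an action of $G/A$ on $A^\vee$. Choosing a set-theoretic section $s\colon G/A\to G$, I define $\eta\in Z^2(G/A,A^\vee)$ at the cochain level by the slant product of $\omega$ in the $A$-direction, using the trivialization of $\omega|_A$ afforded by $\psi|_A\equiv 1$; the hypothesis on $\psi^g|_A$ for all $g\in G$ is exactly what makes $\eta$ $G/A$-equivariant and well-defined as a cocycle class. Having $\widetilde{G}_\eta=A^\vee\rtimes_\eta G/A$, the 3-cocycle $\widetilde{\omega}\in Z^3(\widetilde{G}_\eta,\Qbs)$ is assembled from $\omega$ by a categorical Fourier transform in the $A$-direction, again gauging $A$ away using $\psi|_A$. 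The required Morita equivalence is then supplied by the invertible $(\Vect_{\widetilde{G}_\eta}^{\widetilde{\omega}},\Vect_G^\omega)$-bimodule category obtained by inducing, along $s$, the standard Fourier-duality $(\Vect_{A^\vee},\Vect_A)$-bimodule category $\Vect$. Under this transport the algebra $\Qb^\psi[H]=\Qb^\psi[A\rtimes K]$ (with $\psi|_A\equiv 1$) becomes $\Qb^{\widetilde{\psi}}[K]$ inside $\Vect_{\widetilde{G}_\eta}^{\widetilde{\omega}}$, as required.

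The main obstacle is the cochain-level construction of $\eta$ and $\widetilde{\omega}$ and verification of the 2- and 3-cocycle identities: in particular, showing that the trivializations of $\psi^g|_A$ for varying $g\in G$ splice together compatibly to deliver the $G/A$-equivariance of $\eta$, and that the resulting $\widetilde{\omega}$ actually satisfies the pentagon on $\widetilde{G}_\eta$. Everything else is bookkeeping in the Lyndon--Hochschild--Serre spectral sequence and in the calculus of module categories over pointed fusion categories.
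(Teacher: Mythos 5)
Your strategy is essentially the one the paper uses: Fourier-dualize along the normal abelian subgroup $A$ to replace $\Vect_G^\omega$ by the Morita-equivalent pointed category $\Vect_{\widetilde{G}_\eta}^{\widetilde{\omega}}$ with $\widetilde{G}_\eta=A^\vee\rtimes_\eta G/A$, and then check that the algebra $\Qb^{\psi}[H]$ (equivalently, the module category $\mathcal{M}(H,\psi)$) is carried to $\Qb^{\widetilde{\psi}}[K]$ (equivalently, to $\mathcal{M}(K,\widetilde{\psi})$), so that passing to duals gives the claimed equivalence. The difference is that what you single out as ``the main obstacle''---the cochain-level construction of $\eta$ and $\widetilde{\omega}$, the cocycle identities, and the invertible bimodule category realizing $\C(G,A,\omega,\psi)\cong \Vect_{\widetilde{G}_\eta}^{\widetilde{\omega}}$---is precisely the content of Naidu's Theorem 4.9 in \cite{N}, which the paper cites rather than reproves; the hypothesis that $\psi^g|_A$ is cohomologically trivial for all $g$ is exactly the hypothesis of that theorem (it says that $\C(G,A,\omega,\psi|_A)$ is pointed). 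As written, your argument is therefore a plan rather than a proof: the two steps you assert without verification (the cocycle identities for $\eta,\widetilde{\omega}$, and the identification of the transported algebra with $\Qb^{\widetilde{\psi}}[K]$) are the entire mathematical content, and the second one is also only asserted (``a direct computation'') in the paper.

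Two smaller points in your normalization step. First, $H^1(K,A^\vee)$ vanishes for free, being annihilated by the coprime integers $|K|$ and $|A|$, so no hypothesis is needed to kill that summand; by the same coprimality the Lyndon--Hochschild--Serre spectral sequence degenerates without further input. Second, $\psi$ is not a $2$-cocycle but a cochain with $d\psi=\omega|_H$, so it has no cohomology class and cannot literally be ``the inflation of $\widetilde{\psi}$'' unless $\omega|_H$ is itself inflated from $K$; the hypothesis of the proposition concerns the genuine $2$-cocycles $\psi^g$ on $H^g$, not $\psi$ itself. This part of the argument needs to be reformulated (e.g.\ by first gauging $\omega$ so that its restriction to $A$ is identically $1$) before the spectral-sequence bookkeeping applies.
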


\begin{remark}
Note that since the order of $K$ is relatively prime to the order of
$A$, we have an embedding of $K$ into $\widetilde{G}_\eta$ defined
canonically up to conjugation. In Proposition \ref{na}, we can use
any such embedding.
\end{remark}

\begin{proof}
In the special case $K=1$, this is \cite[Theorem 4.9]{N}. This
theorem claims that there exist $\eta$ and $\widetilde{\omega}$ such
that $\C(G,A,\omega,\psi)$ is equivalent to
$\C(\widetilde{G}_\eta,\{1\},\widetilde{\omega},1)$. Now consider
the module category ${\mathcal M}(H,\psi)$ over
$\C(G,A,\omega,\psi)$. It is shown by a direct computation that
under the above equivalence, this module category goes to a module
category of the form ${\mathcal M}(K,\widetilde{\psi})$. Passing to
the dual categories, we get the statement of the proposition.
\end{proof}

\begin{example}
Consider the symmetric group on three letters $S_3$. Then $\Rep(S_3)
= \mathcal{C}(S_3, S_3, 1, 1) = \mathcal{C}(S_3, S_2, 1, 1)$, which
shows that $3$ is a good prime for $\Rep(S_3)$.
\end{example}

We will use the following deep theorem, whose proof relies on the classification
of finite simple groups, in an essential way.

\begin{theorem}\label{im} (Ito-Michler, \cite{Mi}, see also \cite{It})
Let $G$ be a finite group and let $p$ be a prime number dividing
the order of $G$. Suppose that $p$ does not divide
the dimension of any irreducible representation of $G$.
Then the Sylow $p-$subgroup $S$ of $G$ is both normal and abelian, so
(by a theorem of Schur)
$G=S\rtimes K$, where the order of $K$ is not divisible by $p$.
\end{theorem}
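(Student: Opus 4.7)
The plan is to prove the contrapositive of the main implication: if the Sylow $p$-subgroup $S$ of $G$ fails to be both normal and abelian, then some irreducible representation of $G$ has dimension divisible by $p$. Once normality and abelianness of $S$ are established, the final assertion $G = S\rtimes K$ with $p\nmid |K|$ follows from the Schur--Zassenhaus theorem applied to the short exact sequence $1\to S\to G\to G/S\to 1$, in which the kernel and quotient have coprime orders.

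To establish the contrapositive, I would argue by induction on $|G|$ and study a minimal counterexample. A key observation is that the hypothesis ``$p\nmid \chi(1)$ for every $\chi\in \Irr(G)$'' passes to every quotient $G/N$, because irreducible characters of $G/N$ inflate to irreducible characters of $G$. By minimality, every proper quotient of $G$ already has a normal abelian Sylow $p$-subgroup, and one deduces that $G$ has a unique minimal normal subgroup $N$, which cannot itself be an abelian $p$-group (otherwise a normal Sylow $p$-subgroup of $G/N$ would lift to one of $G$, and the hypothesis would then force it to be abelian).

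The structure of $N$ is constrained by the general fact that a minimal normal subgroup is a direct product of copies of an isomorphic simple group $T$. The case in which $T\cong \Bbb Z/q\Bbb Z$ is abelian is ruled out as above (either $q=p$, contradicting the previous step, or $q\ne p$, allowing a normal Sylow $p$-subgroup to be pulled back from $G/N$). The decisive case is therefore when $T$ is nonabelian simple with $p\mid |T|$. Here one invokes the following deep assertion, proved via the classification of finite simple groups: \emph{every nonabelian finite simple group $T$ whose order is divisible by $p$ admits an irreducible complex character of degree divisible by $p$.} Standard Clifford theory (extending a $G$-invariant character of $N$ to $G$, or inducing a non-invariant one) then produces an irreducible character of $G$ of degree divisible by $p$, contradicting the hypothesis.

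The main obstacle is precisely this simple-group statement; it is what forces the use of the classification. The verification proceeds case by case: for the alternating groups $A_n$ one uses the hook-length formula; for the finite groups of Lie type one exploits Steinberg characters and generic degrees coming from Deligne--Lusztig theory, together with the fact that $p$ either divides the defining characteristic part of $|T|$ or divides the order of a maximal torus; and for the $26$ sporadic simple groups one inspects the ATLAS character tables directly. Every other ingredient in the proof---induction, inflation, Clifford theory, and Schur--Zassenhaus---is routine, so the full weight of the theorem lies in this classification-dependent input.
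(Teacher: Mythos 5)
The paper does not prove this statement: Theorem \ref{im} is quoted as a known deep result of Ito and Michler, cited from \cite{Mi} and \cite{It} and used as a black box (the authors explicitly say its proof relies on the classification of finite simple groups). So there is no in-paper argument to compare against; what you have written is an outline of the standard proof from the literature, and at that level it identifies the right skeleton: minimal counterexample, inflation to quotients, reduction to a unique minimal normal subgroup, the classification-dependent fact that every nonabelian simple group of order divisible by $p$ has an irreducible character of degree divisible by $p$, Clifford theory to pull that degree up to $G$, and Schur--Zassenhaus (really just Schur, since $S$ is abelian) for the splitting.

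There is, however, a genuine gap in your case analysis. When the unique minimal normal subgroup $N$ is elementary abelian of exponent $q\ne p$, you dismiss the case by saying a normal Sylow $p$-subgroup of $G/N$ ``pulls back'' to $G$. It does not: if $SN/N\trianglelefteq G/N$ then $SN\trianglelefteq G$, but $S$ need not be normal in $SN$ (take $G=SN=S_3$ with $q=3$, $p=2$: the Sylow $2$-subgroup is not normal, and correspondingly $S_3$ has an irreducible character of even degree). This coprime case is one of the substantive steps of the theorem --- essentially Ito's contribution --- and requires using the character hypothesis again: one shows that if some $\lambda\in\Irr(N)$ has inertia group $I_G(\lambda)$ not containing a Sylow $p$-subgroup, then inducing from $I_G(\lambda)$ produces $\chi\in\Irr(G)$ with $p\mid\chi(1)$; and if every $\lambda$ is stabilized by a Sylow $p$-subgroup, a fixed-point count on $\Irr(N)\cong N^\vee$ under the coprime action forces $S$ to centralize $N$, whence $S$ is characteristic in the normal subgroup $SN=S\times N$ and hence normal in $G$. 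With that step supplied, and with the abelianness of $S$ extracted from the fact that a nonabelian normal $p$-subgroup has an irreducible character of degree divisible by $p$ which divides some $\chi(1)$, your outline matches the known proof; the classification-dependent input you isolate is indeed exactly where the depth lies.
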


We can now state and prove the main result of this section.

\begin{theorem}\label{goodprime}
Let $\C=\C(G,H,\omega,\psi)$ be a group-theoretical category.
A prime number $p$ is bad for $\C$ if and only if $p$ divides the
dimension of some simple object of $\C$.
\end{theorem}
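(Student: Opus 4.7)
The forward direction is immediate from Proposition~\ref{if}: every group-theoretical category is weakly integral and hence pseudounitary, so $|V|^2=\FPdim(V)^2$ for every simple $V$; thus $p\mid\FPdim(V)$ forces $p\mid|V|^2$, making $p$ bad.

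For the reverse direction, I assume $p$ is coprime to every $\FPdim(V_{g,\rho})=\frac{|H|}{|H^g|}\dim(\rho)$. The plan is to use Proposition~\ref{na} to rewrite $\C$ as $\C(\widetilde{G}_\eta,K,\widetilde{\omega},\widetilde{\psi})$ in which the subgroup $K$ has order coprime to $p$; such a presentation has a transparent good reduction, obtained by choosing representatives of $\widetilde{\omega}$ and $\widetilde{\psi}$ with values in roots of unity inside $\Zpb$, forming the category of bimodules over $\Zpb^{\widetilde{\psi}}[K]$ inside $\Vect_{\widetilde{G}_\eta}^{\widetilde{\omega}}$, and reducing modulo $\mathfrak{m}$, since the twisted group algebra $\Fpb^{\overline{\widetilde{\psi}}}[K]$ is semisimple by twisted Maschke once $|K|$ is coprime to $p$.

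To invoke Proposition~\ref{na} I take $A$ to be the Sylow $p$-subgroup of $H$ and verify the three required properties. First, normality and abelianness of $A$ inside $H$ should follow from Ito--Michler (Theorem~\ref{im}) applied to $H$: the coprimality assumption, restricted to the simples $V_{1,\rho}$, yields $p\nmid\dim(\rho)$ for every projective irreducible representation of $(H,\psi)$, and I then need to bridge from this projective statement to the linear Ito--Michler hypothesis, presumably by passing to the Schur covering group of $H$. This bridging is the main obstacle in the argument.

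Once $A$ is normal abelian in $H$, the remaining properties fall out of the coprimality hypothesis. For normality of $A$ in $G$: for any $g\in G$, coprimality of $\frac{|H|}{|H^g|}$ with $p$ forces the Sylow $p$-subgroup of $H^g=H\cap gHg^{-1}$ to have order $|A|$, and since $A$ is the unique Sylow $p$-subgroup of $H$ we obtain $A\subseteq H^g$; consequently $g^{-1}Ag$ is a $p$-subgroup of $H$ of order $|A|$, hence equals $A$. For cohomological triviality of $\psi^g|_A$: if it were nontrivial then the minimum dimension of a projective irreducible of $(A,\psi^g|_A)$ would be a nontrivial power of $p$ (as $A$ is an abelian $p$-group), and this power would divide $\dim(\rho)$ for every projective irreducible $\rho$ of $(H^g,\psi^g)$, contradicting the coprimality assumption. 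Since $|K|=|H|/|A|$ is automatically coprime to $|A|$, all hypotheses of Proposition~\ref{na} are then satisfied and the good reduction is produced as described in the second paragraph.
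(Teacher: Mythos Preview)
Your overall strategy matches the paper's proof exactly: reduce to Proposition~\ref{na} by taking $A$ to be the Sylow $p$-subgroup of $H$, use Ito--Michler to get $A$ abelian and normal in $H$, then push normality up to $G$ and kill $\psi^g|_A$ via the dimensions of projective irreducibles of the abelian $p$-group $A$. Your arguments for normality of $A$ in $G$ and for triviality of $\psi^g|_A$ are essentially the paper's.

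The one genuine gap is the step you yourself flag as ``the main obstacle.'' You assume the simples $V_{1,\rho}$ correspond to irreducible \emph{projective} representations of $(H,\psi)$, and that one must somehow bridge from a projective hypothesis to the linear Ito--Michler theorem. In fact the cocycle $\psi^g$ at $g=1$ is cohomologically trivial: a $\Qb^\psi[H]$-bimodule supported on the identity double coset $H$ carries $\psi$ on the left and $\psi^{-1}$ on the right, and these cancel (the $\omega$-contribution at $g=1$ also vanishes since $\omega|_H=d\psi$). Hence the identity component of $\C$ is literally $\Rep(H)$ with its \emph{linear} irreducibles, and your coprimality assumption already gives $p\nmid\dim\rho$ for every ordinary irreducible representation of $H$. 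Ito--Michler then applies directly, which is precisely what the paper invokes (``Since $\Rep(H)$ is a fusion subcategory of $\C$\dots''). Note also that your suggested detour through the Schur cover would not work as stated: knowing $p\nmid\dim\rho$ only for the $\psi$-projective irreducibles does not yield the Ito--Michler hypothesis for all linear irreducibles of the cover, so there is no obvious conclusion about its (or $H$'s) Sylow $p$-subgroup.
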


\begin{proof} The ``if" direction follows from Proposition \ref{if}.

Let us prove the ``only if" direction.
Assume that $\omega$ and $\psi$ take values in roots of unity
(which we can do without loss of generality). Take a prime $p$.
If $p$ does not divide the order of $H$, the category
$\C_p:=\C(G,H,\omega_p,\psi_p)$ (where $\omega_p,\psi_p$ are the
reductions of $\omega,\psi$ modulo $p$) is
semisimple over $\overline{\mathbb{F}}_p$, so $p$ is good for $\C$.

Therefore, it suffices to consider the case when $p$ divides the
order of $H$. Suppose that $p$ does not divide the dimension of any
simple object of $\C$, and let us show that $p$ is good. Since
$\Rep(H)$ is a fusion subcategory of $\C$, Theorem \ref{im} implies
that $H=A\rtimes K$, where the order of $K$ is not divisible by $p$
and $A$ is an abelian normal $p-$subgroup of $H$.

We claim that in fact  $A$ is normal in $G$, not only in $H$.
Indeed, by our assumption, $p$ does not divide $|H|/|H^g|$, which
implies that the order of $H^g$ is divisible by the order of $A$, so
$H^g$ must contain $A$. So for any $g\in G$ we have
$g^{-1}Ag\subseteq A\rtimes K$, hence $g^{-1}Ag=A$ (since $K$ is a
$p'-$group).

Finally, we claim that $\psi^g|_A$ is trivial. Indeed, a projective
representation of $A$ with $2-$cocycle $\psi^g|_A$ is a simple
object of $\C$, so its dimension must be coprime to $p$. But the
dimension of this object is a power of $p$, which is $1$ only if
$\psi^g$ is trivial, as desired.

Now, by Proposition \ref{na}, $\C$ is equivalent to
$\C(\widetilde{G}_\eta,K,\widetilde{\omega},\widetilde{\psi})$.
We can assume that $\widetilde{\omega},\widetilde{\psi}$ take
values in roots of unity. Since $p$ does
not divide the order of $K$, the category
$\C(\widetilde{G}_\eta,K,\widetilde{\omega},\widetilde{\psi})$ admits a
reduction $\C(\widetilde{G}_{\eta_p},K,\widetilde{\omega}_p,
\widetilde{\psi}_p)$ to a
fusion category in characteristic $p$, so $p$ is good for $\C$.
\end{proof}

\begin{corollary}\label{repg} Let $G$ be a finite group.
A prime number $p$ is good for $\Rep(G)$ if and only if $p$ does not
divide the dimension of any irreducible representation of $G$. \qed
\end{corollary}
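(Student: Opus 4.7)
The plan is to realize $\Rep(G)$ as a group-theoretical category and then invoke Theorem \ref{goodprime} directly. Specifically, I would observe that $\Rep(G) \cong \C(G,G,1,1)$ (the group-theoretical category corresponding to the full subgroup $H=G$ with trivial cocycles $\omega=1$ and $\psi=1$), which is the standard description of $\Rep(G)$ as bimodules over the trivial group algebra in $\Vect_G$.

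Next I would identify the simple objects. Under the parametrization of simples $V_{g,\rho}$ in $\C(G,H,\omega,\psi)$ recalled before Proposition \ref{na}, taking $H=G$ collapses the double coset space to a single point and forces $H^g = G$ for all $g$, so the simple objects are indexed by pairs $(e,\rho)$ where $\rho$ runs over irreducible representations of $G$; moreover the dimension formula $\tfrac{|H|}{|H^g|}\dim(\rho)$ specializes to $\dim(\rho)$. Thus the dimensions of the simple objects of $\Rep(G)$ in the sense of $\C(G,G,1,1)$ are exactly the degrees of the irreducible representations of $G$.

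Having established these identifications, the corollary is an immediate application of Theorem \ref{goodprime}: $p$ is bad for $\C(G,G,1,1)$ iff $p$ divides the dimension of some simple object, which by the previous paragraph happens iff $p$ divides the degree of some irreducible representation of $G$. Contrapositively, $p$ is good iff no irreducible representation of $G$ has degree divisible by $p$.

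There is essentially no obstacle here beyond the bookkeeping of the identification $\Rep(G)\cong \C(G,G,1,1)$, which is standard (see \cite{O}, \cite{ENO1}). The real content of the corollary is Theorem \ref{goodprime}, whose proof already invokes the Ito--Michler theorem; in the special case $H=G$ of the corollary, the Ito--Michler input is used directly to produce a normal abelian Sylow $p$-subgroup of $G$, which is then absorbed via Proposition \ref{na} to yield a reduction in characteristic $p$. \qed
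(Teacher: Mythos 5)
Your proposal is correct and matches the paper's intended argument: the corollary is stated with a \qed precisely because it is the special case $\C = \C(G,G,1,1) \cong \Rep(G)$ of Theorem \ref{goodprime}, where the simple objects are the irreducible representations of $G$ with their usual dimensions. Your bookkeeping of the parametrization $V_{g,\rho}$ and the dimension formula in the case $H=G$ is accurate, so nothing is missing.
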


\begin{example}\label{pgl} Corollary \ref{repg} fails for
reductive algebraic groups. Indeed, consider the group $G:={\rm
PGL}(2,\Qb)$, and let $\C:=\Rep(G)$ be the category of rational
representations of $G$. Its simple objects $V_{2m}$ have dimensions
$2m+1$, where $m$ is a nonnegative integer. Thus, the prime $2$ is
the only prime that has a chance to be good for this category, since
any other prime divides the dimension (and hence, M\"uger's squared
norm) of some irreducible object. Nevertheless, we claim that the
prime $2$ is bad for $\Rep(G)$ (so in fact, for this category, all
primes are bad). Indeed, suppose we have a good reduction $\C_2$ of
$\Rep(G)$ mod $2$, and let us derive a contradiction.

Let $X\in \C_2$ be the reduction of the $3-$dimensional (adjoint)
representation of $G$. Then we have a unique morphism $m: X\otimes
X\to X$ and isomorphism $b: X\to X^*$ up to scaling. Thus, if we fix
the scaling of these, we can attach an amplitude $A(T)$ to any
planar trivalent graph (allowing multiple edges but no self-loops),
by contracting the morphisms $m$ corresponding to the vertices using
$b$. Moreover, the amplitude $A(T_2)$ of the simplest planar
trivalent graph $T_2$ (with $2$ vertices and $3$ edges, i.e., a
circle with a diameter) is nonzero. Let us normalize it to be $1$.
Then we get a well defined amplitude of any other planar trivalent
graph.

Consider the amplitude $A(T_4)$ of the next simplest graph $T_4$
with $4$ vertices - the square with diagonals (with one of the
diagonals going outside the square to avoid intersection and realize
the graph in the plane). It is supposed to be the reduction mod $2$
of the corresponding amplitude $A_{\Qb}(T_4)$ over $\Qb$. But over
$\Qb$, this is a computation in the representation theory of the Lie
algebra ${\mathfrak{sl}}(2)$. Namely, if $x_i$ is an orthonormal
basis of ${\mathfrak{sl}}(2)$, we get
\begin{equation}\label{lie}
\sum x_ix_jx_ix_j|_{V_2}=A_\Qb(T_4)\left(\sum x_i^2\right)^2|_{V_2},
\end{equation}
where $V_2$ is the adjoint representation. A direct computation then
shows that $A_\Qb(T_4)$ is $3/2$, which is a contradiction since
$A(T_4)$ is supposed to be the reduction of this number modulo $2$.
\footnote{For the reader's convenience, let us give explicit
expressions of $A(T_2)$ and $A(T_4)$ in terms of $m$ and $b$. Let
$m_*: X\to X\otimes X$ be the map obtained from the dual of $m$ by
identifying $X^*$ with $X$ using $b$. Then we have
$$
A(T_2)=\Tr(mm_*),\ A(T_4)=\Tr(m(1\otimes m)(m_*\otimes 1)m_*).
$$
Using that if $X$ is the adjoint representation of
${\mathfrak{sl}}(2)$ then $m$ is the Lie bracket and $m_*$ is its
dual under the Killing form, we arrive at formula (\ref{lie}).}
\end{example}

\section{Verlinde categories}

Let $\mathfrak{g}$ be a simple complex Lie algebra. For simplicity
let us assume that it is simply laced (so $(\alpha,\alpha)=2$ for
roots). Let $h$ be the Coxeter number of $\mathfrak{g}$, let
$\theta$ be the highest root of $\mathfrak{g}$, and let $\rho$ be
half the sum of positive roots of $\mathfrak{g}$.

Let $l>h$ be a positive integer, and let $q\in \mathbb{C}$ be such
that the order of $q^2$ is $l$. Set
$[n]_q:=\frac{q^n-q^{-n}}{q-q^{-1}}$.

Following Andersen and Paradowski \cite{AP},
one can define the Verlinde fusion category
$\C(\mathfrak{g},q)$. Its simple objects are $V_\lambda$,
where $\lambda$ are dominant weights for $\mathfrak{g}$ such that
$(\lambda+\rho,\theta)<l$. The dimension of $V_\lambda$ is given by
the $q-$Weyl formula:
$$
\dim (V_\lambda)=\prod_{\alpha>0}
\frac{[(\lambda+\rho,\alpha)]_q}{[(\rho,\alpha)]_q}=
q^{2(\lambda,\rho)}\prod_{\alpha>0}\frac{
(1-q^{-2(\lambda+\rho,\alpha)})}{(1-q^{-2(\rho,\alpha)})}.
$$

We will need the following elementary and well known lemma (whose
proof we include for the reader's convenience).

\begin{lemma}\label{norm}
Let $v$ be a primitive root of unity of order $n$. The norm $N(1-v)$
of $1-v$ is given by:
$$N(1-v)=
\left\{ \begin{array}{cc}1, &\mbox{$n$ is not a prime power} \\
p, &\mbox{$n$ is a power of a prime $p$}
\end{array}
\right. .$$
\end{lemma}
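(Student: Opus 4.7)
The plan is to identify $N(1-v)$ with the value $\Phi_n(1)$ of the $n$-th cyclotomic polynomial at $x=1$, and then compute $\Phi_n(1)$ by induction using the factorization of $x^n-1$. Since the Galois conjugates of $v$ over $\Q$ are precisely the primitive $n$-th roots of unity $v^k$ with $\gcd(k,n)=1$, the norm of $1-v$ equals
$$N(1-v)=\prod_{\gcd(k,n)=1}(1-v^k)=\Phi_n(1).$$

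To evaluate $\Phi_n(1)$, I would exploit the factorization $x^n-1=\prod_{d\mid n}\Phi_d(x)$. Dividing both sides by $\Phi_1(x)=x-1$ and letting $x\to 1$ yields, for every $n>1$, the identity
$$n=\prod_{\substack{d\mid n\\ d>1}}\Phi_d(1).$$

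Now induct on $n$. For $n=p$ prime, this identity immediately gives $\Phi_p(1)=p$. For $n=p^k$ with $k\ge 2$, the inductive hypothesis gives $\Phi_{p^i}(1)=p$ for $1\le i<k$, so $\Phi_{p^k}(1)=p^k/p^{k-1}=p$. If $n$ has at least two distinct prime factors, split the divisors $d>1$ of $n$ into prime powers and non-prime-powers. By induction the non-prime-power proper divisors each contribute $1$, while the prime-power divisors contribute $\prod_{p\mid n}p^{a_p}=n$, where $p^{a_p}\,\|\,n$. The displayed identity then forces $\Phi_n(1)=1$.

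The main point requiring care is the last inductive step: one must check that the prime-power proper divisors of $n$ account exactly for the factor of $n$ on the right, leaving $\Phi_n(1)$ determined as the quotient. Beyond this bookkeeping, everything is a standard consequence of the cyclotomic factorization of $x^n-1$ together with the identification of $N(1-v)$ as $\Phi_n(1)$.
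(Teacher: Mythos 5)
Your proof is correct, but it evaluates $\Phi_n(1)$ by a different mechanism than the paper. Both arguments begin identically, by identifying $N(1-v)$ with $\prod_{\gcd(k,n)=1}(1-v^k)=\Phi_n(1)$. The paper then computes $\Phi_n(1)$ in one shot: it writes this product via inclusion--exclusion as an alternating ratio of the quantities $x^{n/(p_{i_1}\cdots p_{i_r})}-1$, passes to $x=1$ to obtain the corresponding ratio of integers $n/(p_{i_1}\cdots p_{i_r})$, and checks that the exponent of each prime $p_i$ in that ratio is $(1-1)^{m-1}$, hence $0$ unless $n$ has a single prime factor (in which case it is $1$). You instead use the factorization $x^n-1=\prod_{d\mid n}\Phi_d(x)$ to derive the identity $n=\prod_{d\mid n,\ d>1}\Phi_d(1)$ and run a strong induction on $n$; your inductive step is sound, since for $n$ with at least two distinct prime factors the proper prime-power divisors contribute exactly $\prod_{p\mid n}p^{a_p}=n$ while the remaining proper divisors contribute $1$, forcing $\Phi_n(1)=1$. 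The paper's route buys a closed-form, induction-free computation; yours trades the inclusion--exclusion bookkeeping for a routine induction over divisors. Both are complete and equally standard.
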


\begin{proof}
Let $n=\prod_{i=1}^m p_i^{s_i}$ be the prime factorization of $n$.
Then
$$
N(1-v)=\prod_{k=1,\,p_1,\dots,p_m\nmid k}^n(1-v^k),
$$
which is
the value of the cyclotomic polynomial $\prod_{k=1,\,p_1,\dots,p_m\nmid
k}^n(x-v^k)$ at $x=1$. By the exclusion-inclusion principle,
\begin{eqnarray*}
\lefteqn{ \prod_{k=1,\,p_1,\dots,p_m\nmid k}^n(x-v^k)|_{x=1}}\\
& & = \frac{(x^n-1)\prod_{i<j}(x^{\frac{n}{p_ip_j}}-1)\cdots}
{\prod_i(x^{\frac{n}{p_i}}-1)\cdots}|_{x=1}=
\frac{n\prod_{i<j}\frac{n}
{p_ip_j}\cdots}{\prod_i\frac{n}{p_i}\cdots}.
\end{eqnarray*}
Now, the power of $p_i$ on the right hand side equals
$$
1-(m-1)+{m-1 \choose 2}-\dots=(1-1)^{m-1}=0
$$
unless $m=1$, in which
case it equals $1$, as claimed.
\end{proof}

\begin{theorem}
Assume that $l$ is odd.

(i) If $l$ is a prime then any prime $p$ is good for
$\C(\mathfrak{g},q)$.

(ii) If $l$ is not a prime then a prime $p\ge h$ is good for
$\C(\mathfrak{g},q)$ if and only if $p$ does not divide $l$.
\end{theorem}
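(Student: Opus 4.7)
The plan is to reduce the ``bad prime'' direction to an explicit norm calculation via the $q$-Weyl formula and Lemma \ref{norm}, invoking Proposition \ref{if}, and to handle the ``good prime'' direction by specializing an appropriate integral form of $U_q(\g)$ and appealing to standard lifting machinery.

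Setting $u=q^2$ (of order $l$), the $q$-Weyl formula gives, up to a root of unity,
$$\dim V_\lambda = \prod_{\alpha > 0} \frac{1 - u^{(\lambda+\rho,\alpha)}}{1 - u^{(\rho,\alpha)}},$$
so $N(\dim V_\lambda)$ is the corresponding product of norms $N_{\mathbb{Q}(u)/\mathbb{Q}}(1-u^k)$. By Lemma \ref{norm} applied in the subfield $\mathbb{Q}(\zeta_n)\subseteq\mathbb{Q}(u)$, where $n$ is the order of $u^k$, and by transitivity of norms, $N_{\mathbb{Q}(u)/\mathbb{Q}}(1-u^k) = p^{\phi(l)/\phi(p^c)}$ when $n=p^c$ for a prime $p$, and is $1$ otherwise. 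Since $\C(\g,q)$ is pseudounitary, $|V_\lambda|^2 = \dim(V_\lambda)^2$, and Proposition \ref{if} forces a good prime to be coprime to every $N(\dim V_\lambda)$. In part (i), when $l$ is prime every $u^k$ with $1\le k\le l-1$ has order $l$, so numerator and denominator each contribute $l^{|\Delta^+|}$, giving $N(\dim V_\lambda)=1$ for every $\lambda$; thus no prime is ruled out, consistent with (i).

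For the ``only if'' half of (ii), write $l=p^a m$ with $\gcd(p,m)=1$ and $(a,m)\neq(1,1)$. The key construction is the explicit dominant weight $\lambda := (l/p-1)\rho$, which lies in the fundamental alcove because $(\lambda+\rho,\theta) = (l/p)(h-1) < l$ exactly thanks to the hypothesis $p \ge h$. For every $\alpha>0$ one has $(\lambda+\rho,\alpha) = (l/p)(\rho,\alpha)$ with $(\rho,\alpha) \le h-1 < p$ coprime to $p$, so $u^{(\lambda+\rho,\alpha)}$ has order exactly $p$, contributing $v_p = \phi(l)/\phi(p) = p^{a-1}\phi(m)$ to each of the $|\Delta^+|$ numerator factors. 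The denominator factor $u^{(\rho,\alpha)}$ has order a power of $p$ (namely $p^a$) iff $m\mid(\rho,\alpha)$, contributing $\phi(m)$, and otherwise contributes $0$. Hence
$$v_p\!\left(N(\dim V_\lambda)\right) = \phi(m)\bigl(|\Delta^+|\,p^{a-1} - \#\{\alpha > 0 : m\mid(\rho,\alpha)\}\bigr),$$
which I verify is strictly positive by a short case split: if $a\ge 2$, the $p^{a-1}\ge p$ factor makes the first summand dominate; if $a=1$ (so $m\ge 3$ since $l$ is odd and not prime), the simple roots $\alpha_i$ satisfy $(\rho,\alpha_i)=1\not\equiv 0\pmod m$, so the counted set omits at least $\rank(\g)$ positive roots. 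Either way $p\mid N(\dim V_\lambda)$, and $p$ is bad by Proposition \ref{if}.

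For the constructive directions (part (i) at every $p$, and the ``if'' half of (ii)), the plan is to specialize Lusztig's divided-power integral form of $U_q(\g)$ at an appropriate root of unity over $\bar{\mathbb{F}}_p$ and take the semisimple (negligible-modules) quotient of the category of tilting modules, obtaining $\C_p$ over $\bar{\mathbb{F}}_p$ together with a built-in lift to $\Zpb$ via the same integral form. When $p\nmid l$ and a primitive root of unity of the needed order exists in $\bar{\mathbb{F}}_p$, the nonvanishing of all simple dimensions (established above via the norm computation) combined with the lifting theorem of \cite[Section 9]{ENO1} secures semisimplicity and the required equivalence after extending scalars to $\Qpb$. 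The main obstacle I anticipate is the exceptional case $p=l$ prime in part (i), where no suitable root of unity exists in characteristic $p$; here one must instead specialize $q \to 1 \in \bar{\mathbb{F}}_p$ and identify $\C_p$ with a truncation of the category of rational $G$-modules cut out by the first Steinberg alcove, invoking classical modular representation theory of the algebraic group $G$.
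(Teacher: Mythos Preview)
Your approach is essentially identical to the paper's: the same test weight $\lambda=(l/p-1)\rho$, the same appeal to Lemma~\ref{norm} and Proposition~\ref{if} for the ``only if'' in (ii), and the same constructions (the Andersen--Paradowski tilting quotient for $p\nmid l$, and the symmetric category coming from the first Steinberg alcove of the algebraic group for $p=l$ prime, as in \cite{GM1,GM2}) for the positive directions. The paper's write-up is marginally slicker in that it works with $p$-adic norms directly---each denominator factor has $p$-adic norm $\ge p^{-1/(p-1)}$ and the simple-root factors have norm $1$---so no case split on $a$ is needed; also, your invocation of pseudounitarity is both unnecessary and not valid for every choice of $q$, but since $\C(\g,q)$ is spherical one has $|V_\lambda|^2=\dim(V_\lambda)^2$ anyway, so this does not affect the argument.
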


\begin{remark}
Note that the condition $p\ge h$ cannot be dropped in (ii).
For instance, assume that ${\mathfrak{g}}={\mathfrak{sl}}(n)$
(so $h=n$), and let $l=h+1$ (which we assume to be odd).
Then $\C(\mathfrak{g},q)$ is a pointed category, so
it has no bad primes.
\end{remark}

\begin{proof}
If $p$ is relatively prime to $l$,
then one can define the fusion category $\C_p(\mathfrak{g},q)$
over $\overline{\mathbb{F}_p}$ (similarly to \cite{AP}),
so $p$ is a good prime.

(i) For $p=l$, there is a symmetric category over
$\overline{\mathbb{F}_p}$ which lifts to the braided category
$\C(\mathfrak{g},q)$ (see \cite{GM1}, \cite{GM2}, \cite{AP}), so
again $p$ is good.

(ii) Suppose $p$ divides $l$. By Proposition \ref{if}, it is enough
to show that at least one of the numbers $\dim (V_\lambda)$ is not
relatively prime to $p$ or, equivalently, that its $p-$adic norm is
$<1$. To this end, pick $\lambda:=(\frac{l}{p}-1)\rho$. Such
$\lambda$ is allowed since
$(\lambda+\rho,\theta)=\frac{(h-1)l}{p}<l$.

By Lemma \ref{norm}, the $p-$adic norm of $1-v$ is $1$ if the order
of $v$ is not a prime power, and is $p^{-1/p^s(p-1)}$ if the order
of $v$ is equal to $p^{s+1}$. Now, the order of
$q^{-2(\lambda+\rho,\alpha)}=q^{-2l(\rho,\alpha)/p}$ is $p$ for any
$\alpha$ (since by assumption, $(\rho,\alpha)<h\le p$). Thus, the
$p-$adic norm of every factor in the numerator of the $q-$Weyl
formula for $\dim (V_\lambda)$ is $p^{-1/(p-1)}$, while the $p-$adic
norm of every factor of the denominator is at least that, and the
$p-$adic norm of $[(\rho,\alpha_i)]_q=1$ is $1$ for any simple root
$\alpha_i$. Thus, the $p-$adic norm of $\dim (V_\lambda)$ is $<1$,
i.e., $\dim(V_\lambda)$ is not relatively prime to $p$, as desired.
\end{proof}

\begin{remark}
If $l$ is a prime then the dimensions $\dim (V_\lambda)$ are units.
Indeed, for any $1\le s<l$ there is a Galois automorphism sending $q^{2}$
to $q^{2s}$, so the norm of
$[s]_q=\frac{q^s-q^{-s}}{q-q^{-1}}$ is
equal to $1$.
\end{remark}

\begin{example}\label{sny} The following example, due to Noah Snyder,
shows that the converse to Proposition \ref{if} fails for general
(and even for braided) fusion categories. Indeed, consider the
Verlinde category $\C(\mathfrak{sl}_2,q)$, where $q:=e^{\pi i/8}$
(i.e., $l=8$), with simple objects $V_0=\bold 1$, $V_1$,\dots,
$V_6$, and let $\D$ be its tensor subcategory generated by
$V_0,V_2,V_4,V_6$. Then $\dim (V_0)=\dim(V_6)=1$,
$\dim(V_2)=\dim(V_4)=1+\sqrt{2}$, which are all units, so any prime
is relatively prime to the dimension of any simple object of $\D$.
Yet, considering the graph $T_4$ as in Example \ref{pgl}, we get by
a direct calculation
$$
A_\Qb(T_4)=\frac{[3]_q([3]_q-2)}{[3]_q-1}=\frac{1}{\sqrt{2}},
$$
which shows that $2$ is a bad prime for $\D$.

Also, as was explained to us by Noah Snyder, it follows from the
arguments similar to those in \cite{MS} (computation of the ``third
twisted moment'') that 3 is a bad prime for the $6-$object Haagerup
category, even though the dimensions of all its simple objects are
units.\footnote{As was explained to us by Noah Snyder, it can also
be shown, using a specific presentation of the $6-$object Haagerup
category (see \cite{Iz}), that any other prime is good for this
category.}
\end{example}

\section{Conjectures and questions}

\begin{question}
If a prime $p$ is relatively prime to the global dimension
$\dim(\C)$, does $p$ have to be a good prime for $\C$ in cases
(i)-(iii)?

(i) $\C$ is a general fusion category.

(ii) $\C$ is an integral fusion category, i.e.,
$\C=\Rep(H)$ for a semisimple quasi-Hopf algebra $H$.

(iii) $\C=\Rep(H)$ for a semisimple Hopf algebra $H$.
\end{question}

Note that in (ii) and (iii), $\dim(\C)=\dim(H)$.

A positive answer in case (iii) would imply that any prime divisor
of the dimension of a simple $H-$module divides the dimension of
$H$, which is a weak (but still open) form of Kaplansky's $6-$th
conjecture.

\begin{question}
(i) Let $\C$ be an integral fusion category, and suppose that $p$
does not divide the (Frobenius-Perron) dimension of any simple
object. Does $p$ have to be a good prime for $\C$, i.e, does Theorem
\ref{goodprime} hold for $\C$?

(ii) Is this true for weakly group-theoretical categories?
\end{question}

\begin{question} Does any fusion category $\C$ admit at most one
good reduction modulo any prime $p$?
\end{question}

\begin{remark}
The answer is ``yes'' if $p$ is relatively prime to the global
dimension of $\C$, by \cite[Theorem 9.6]{ENO1}.
\end{remark}

\begin{question} Suppose that $p$ is a good prime for $\C$. Is $p$ good
for any module category over $\C$?
\end{question}

\end{document}